\newcounter{commentcounter}
\renewcommand*{\backref}[1]{}
\renewcommand*{\backrefalt}[4]
{
    \ifcase #1
        No citation in the text.
    \or
        Cited on Page #2.
    \else
        Cited on Pages #2.
    \fi
}
\newtheorem{thm}{Theorem}[section]
\newtheorem{lemma}[thm]{Lemma}
\newtheorem{corollary}[thm]{Corollary}
\newtheorem{prop}[thm]{Proposition}
\newtheorem{question}[thm]{Question}
\theoremstyle{definition}
\newtheorem{defn}[thm]{Definition}
\theoremstyle{plain}
    \newtheoremstyle{TheoremNum}
        {\topsep}{\topsep} 
        {\itshape} 
        {-0.25cm} 
        {\bfseries} 
        {.} 
        { }  
        {\thmname{#1}\thmnote{ \bfseries #3}}
    \theoremstyle{TheoremNum}
\newcommand*{\claimproofname}{My proof}
\DeclareMathOperator{\Aut}{\mathrm{Aut}}
\DeclareMathOperator{\Isom}{\mathrm{Isom}}
\DeclareMathOperator{\cay}{Cay}
\newcommand{\cald}{{\mathcal{D}}}
\newcommand{\calg}{{\mathcal{G}}}
\newcommand{\call}{{\mathcal{L}}}
\newcommand{\calm}{{\mathcal{M}}}
\newcommand{\calq}{{\mathcal{Q}}}
\newcommand{\cals}{{\mathcal{S}}}
\newcommand{\calt}{{\mathcal{T}}}
\newcommand{\calu}{{\mathcal{U}}}
\newcommand{\calv}{{\mathcal{V}}}
\newcommand{\calw}{{\mathcal{W}}}
\newcommand{\PSL}{\mathrm{PSL}}
\newcommand{\SU}{\mathrm{SU}}
\newcommand{\Sp}{\mathrm{Sp}}
\newcommand{\SO}{\mathrm{SO}}
\newcommand{\CAT}{\mathrm{CAT}}
\newcommand{\onto}{\twoheadrightarrow}
\DeclareMathOperator{\CT}{CT}
\newcommand{\EE}{\mathbb{E}}
\newcommand{\RH}{\mathbf{H}_\mathbb{R}} 
\newcommand{\CH}{\mathbf{H}_\mathbb{C}} 
\newcommand{\HH}{\mathbf{H}_\mathbb{H}} 
\newcommand{\OH}{\mathbf{H}^2_\mathbb{O}} 
\newcommand{\Ffour}{\mathrm{F}_4^{-20}} 
\newcommand{\ZZ}{\mathbb{Z}}
\newcommand{\RR}{\mathbb{R}}
\tikzstyle{blackNode}=[fill=black, draw=black, shape=circle]
\author{Sam Hughes}
\address[S.~Hughes]{Rheinische Friedrich-Wilhelms-Universit\"at Bonn, Mathematical Institute, Endenicher Allee 60, 53115 Bonn, Germany}
\email{sam.hughes.maths@gmail.com}\email{hughes@math.uni-bonn.de}
\author{Motiejus Valiunas}
\address[M.~Valiunas]{Instytut Matematyczny, Universytet Wroc{\l}awski, plac Grunwaldzki 2/4, 50-384 Wroc{\l}aw, Poland}
\email{motiejus.valiunas@math.uni.wroc.pl}
\title[Asynchronous automaticity and non-positive curvature]{A note on asynchronously automatic groups and notions of non-positive curvature}
\date{\today}
\subjclass[2020]{20F10, 20F65, 20F67, 20E08}
\begin{document}
\begin{abstract}
    We prove groups acting cocompactly on locally finite trees with hyperbolic vertex stabilisers are asynchronously automatic.  Combining this with previous work of the authors, we obtain an example of a group satisfying several non-positive curvature properties (being a $\mathrm{CAT}(0)$ group, an injective group, a hierarchically hyperbolic group, and having quadratic Dehn function) which is asynchronously automatic but not biautomatic.
\end{abstract}
\maketitle

\section{Introduction}
Studying languages and automata related to presentations of groups has been one of the driving motivations of combinatorial and geometric group theory.  This has given rise to many classes of groups --- biautomatic groups, automatic groups, asynchronously automatic groups, semihyperbolic groups (introduced in \cite{AlonsoBridson1995}), and so on.  The reader is referred to \cite{EpsteinEtAl1992} for background on automaticity and \cite{Rees2022} for a more recent survey.  

Applying the theory of languages and automata to groups has seen a number of successes such as: giving effective solutions to the word problem in many $3$-manifold groups \cite{EpsteinEtAl1992} and many other groups (for example mapping class groups \cite{Mosher1995}, $\CAT(0)$ cubical groups \cite{NibloReeves1998}, systolic groups \cite{JanuskiewiczSwicatkowski2006}, Helly groups \cite{ChapolinChepoiGenevoisHiraiOsajda2020}, and Coxeter groups \cite{MunroOsajdaPrzytycki2022,OsajdaPrzytycki2022}); characterising virtually free groups \cite{MullerSchupp1983} and hyperbolic groups \cite{Papasoglu1995,HughesNairneSpriano2022} via languages; as well as elucidating many structural properties of groups admitting stronger language or automata related properties \cite{GerstenShort1991,AlonsoBridson1995}.

There are still large gaps in our understanding of how various forms of non-positive curvature relate with various versions of automaticity.  For example, a recent breakthrough of Leary and Minasyan gave the first examples of $\CAT(0)$ groups which are not biautomatic \cite{LearyMinasyan2021} and an analogous result involving other forms of non-positive curvature was obtained by the authors in \cite{HughesValiunas2022}.  It is still an open question if $\CAT(0)$ groups are necessarily (asynchronously) automatic.

In this note we will examine the class \emph{asynchronously automatic groups}, which we will define in \Cref{sec:prelims}, and its interaction with various classes of non-positively curved groups.  Namely, groups with quadratic Dehn function, $\CAT(0)$ groups, hierarchically hyperbolic groups (HHGs), and groups acting geometrically on injective metric spaces (injective groups).  See \cite{BridsonHaefligerBook,BehrstockHagenSisto2017,BehrstockHagenSisto2019,Lang2013} for definitions of the various classes and \cite{HaettelHodaPetyt2021,HughesValiunas2022} for their interactions.  Our main technical result is a combination theorem relating groups acting on locally finite trees, hyperbolic groups, and the class of asynchronously automatic groups. 

\begin{prop} \label{prop:main}
Let $\Gamma$ be a group.  If $\Gamma$ acts cocompactly on a locally finite tree with hyperbolic vertex stabilisers, then $\Gamma$ is asynchronously automatic.
\end{prop}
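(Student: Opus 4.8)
The plan is to combine Bass--Serre theory with an explicit construction of an asynchronously automatic structure adapted to the tree action. The first step is structural: since $\Gamma$ acts cocompactly on a locally finite tree $T$, the quotient graph $\Gamma \backslash T$ is finite, so $\Gamma = \pi_1(\calg, \Lambda)$ is the fundamental group of a finite graph of groups whose vertex groups are the (hyperbolic) vertex stabilisers and whose edge groups are the edge stabilisers. Because $T$ is locally finite and the action cocompact, the image of each edge group in each of its two vertex groups has finite index: this index is the size of the orbit of an incident edge under the vertex stabiliser, which is bounded by the valence of the vertex. Fix a spanning tree of $\Lambda$ with stable letters $t_e$, a finite generating set $S$ of $\Gamma$ assembled from finite generating sets $S_v$ of the vertex groups together with the $t_e$, a geodesic (say $\mathrm{ShortLex}$) automatic structure $L_v \subseteq S_v^\ast$ on each hyperbolic $G_v$, and, for each edge $e$ at $v$, a finite left transversal of the finite-index subgroup $\iota_e(G_e) \le G_v$.

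The language $L \subseteq S^\ast$ consists of Bass--Serre normal forms: a word is a ``backbone'' --- an alternating string of stable letters $t_e^{\pm 1}$ and transversal representatives spelling a reduced path in the sense of Bass--Serre theory --- followed by a single ``free'' syllable belonging to $L_v$, where $v$ is the terminal vertex of the backbone. The finite-index hypothesis is what makes this work: all transversals are finite, so the backbone is governed by a finite automaton (tracking which stable letters occur, which transversal element sits where, and the local ``no pinch'' condition), and concatenating this with the finitely many regular languages $L_v$ shows that $L$ is regular; by the Bass--Serre normal form theorem, $L$ maps bijectively onto $\Gamma$.

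The heart of the proof is the asynchronous fellow-traveller property. Right-multiplying a normal form by a vertex generator either alters the free syllable within a single vertex group $G_v$ --- where $L_v$ supplies a synchronous fellow traveller in $\mathrm{Cay}(G_v, S_v)$, hence an asynchronous one in $\mathrm{Cay}(\Gamma, S)$ --- or triggers a ``carry'' of an edge-group element into an adjacent syllable, changing the word only in a bounded-plus-one-syllable region. Right-multiplying by a stable letter $t_e$ is the interesting case: the free syllable $h \in G_v$ is split as $h = h^{\mathrm{cr}} \iota_e(c)$ with $h^{\mathrm{cr}}$ from a finite transversal, and the defining relation $\iota_e(c)\, t_e = t_e\, \iota_{\bar{e}}(c)$ pushes the edge-group part across the new stable letter; depending on whether a ``pinch'' occurs this either appends a new stable letter and moves a shortened syllable to a new vertex, or cancels the last stable letter and merges a syllable into its predecessor --- and, crucially, the process does not cascade, because the free syllable sits at the end of the backbone, with no stable letter beyond it to iterate on. In either subcase the two normal forms share a long common prefix and differ only in a controlled suffix, and the required monotone reparametrisation matches a prefix of ``$\iota_e(c)$, spelled via $L_v$'' against ``$t_e$ followed by the corresponding prefix of $\iota_{\bar{e}}(c)$, spelled via $L_{v'}$''; these stay boundedly close because $\iota_e(c)$ and $\iota_{\bar{e}}(c)$ are conjugate by $t_e$ and because both spellings track geodesics in the edge group $G_e$ up to bounded error (the subgroups $\iota_e(G_e)$ and $\iota_{\bar{e}}(G_e)$ are undistorted in their hyperbolic vertex groups, so their intrinsic and extrinsic metrics agree coarsely with that of $G_e$, and in a hyperbolic group any two quasigeodesics with common endpoints asynchronously fellow travel with uniform constants).

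I expect the last point to be the main obstacle: arranging that the independently chosen automatic structures on the various vertex groups are mutually ``compatible'' across the edges, so that carrying cannot introduce an unbounded drift between the two normal forms. This is precisely where hyperbolicity of the vertex stabilisers is used rather than mere automaticity --- quasiconvexity and undistortedness of finite-index subgroups, together with stability of quasigeodesics, let one replace all the relevant spellings by their geodesic representatives in the edge groups up to a uniformly bounded error. A secondary, purely technical, point is to set up the normal form so that each individual carry rescales a syllable only by the bounded index $[G_v : \iota_e(G_e)]$ and not by an unbounded amount; everything else (distinguishing spanning-tree from non-tree edges, tracking which vertex group the free syllable inhabits, handling the finitely many boundary cases) is routine bookkeeping.
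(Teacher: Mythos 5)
Your construction is essentially the paper's: the same Bass--Serre normal-form language (a backbone of transversal and edge letters followed by a geodesic spelling of the vertex-group part), the same use of local finiteness to make the transversals finite and hence the language regular with finite fibres, and the same geometric engine (stability of quasi-geodesics in the hyperbolic vertex groups, with finite-index edge subgroups quasiconvex and undistorted) to control representatives of neighbouring elements. The organisational difference --- you run a per-generator ``carry'' analysis and compare the two spellings across an edge through the (hyperbolic) edge group, whereas the paper keeps the entire free part in a single base vertex group $\Gamma_c$ and proves one global Hausdorff estimate by pushing every vertex of the second path into the coset $\pi_A(V)\Gamma_c$ (\Cref{prop:Hausdorff}) --- is a matter of bookkeeping, and your identification of where hyperbolicity and the finite-index observation enter is correct.

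The genuine gap is the final inference. For \emph{asynchronous} automaticity, unlike the synchronous case, ``regular language $+$ asynchronous fellow-traveller property'' is \emph{not} by itself a sufficient criterion: the characterisation you would need (\cite[Theorem~7.2.8]{EpsteinEtAl1992}, the route the paper takes via boundedly asynchronous structures) requires, in addition to the Hausdorff/fellow-traveller bound, a \emph{departure function} for $(\Gamma,A,\call)$, i.e.\ a properness condition ensuring that long subwords of accepted words represent group elements far from the identity; without it the two-tape automata $\calm_x$ need not be realisable with finitely many states. Your proposal never addresses this, and it is not a triviality in this setting: vertex groups can be heavily distorted in $\Gamma$ (already $\mathrm{BS}(1,2)$ satisfies the hypotheses and its vertex group $\ZZ$ is exponentially distorted), so ``the free syllable is geodesic in $G_v$'' gives no control on displacement in $\cay(\Gamma,A)$. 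The departure function instead comes from the fact that your language is finite-to-one (in your case bijective) onto $\Gamma$; this is exactly the paper's \Cref{lem:departure}, proved by a counting argument on subwords read between two states of the accepting automaton. With that step added, and your fellow-traveller analysis carried out as sketched, the argument closes; as written, the conclusion does not follow from what you establish.
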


The proposition can be applied to prove generalised Baumslag--Solitar groups (GBS$_1$ groups) are asynchronously automatic and to give another proof that hyperbolic-by-free groups are asynchronously automatic (note that a more general result on split extensions of hyperbolic groups was obtained by Bridson \cite{Bridson1993}).  The result can be deduced by combining work of Shapiro \cite{Shapiro1992} and Gersten--Short \cite{GerstenShort1991}.  However, we give a direct proof to make explicit the asynchronous structure.

The asynchronous structures we construct are similar to those constructed recently by Hermiller, Holt, Rees, and Susse \cite{HermillerHoltReesSusse2021} for groups acting on trees, under the assumption that the edge stabilisers admit generating sets satisfying two technical conditions --- \emph{stability} and \emph{limited crossover}. However, the latter condition seems difficult to satisfy in our setting (when the tree is locally finite), and so Proposition~\ref{prop:main} does not follow immediately from the results in \cite{HermillerHoltReesSusse2021}.

We highlight the next corollary (which we prove in \Cref{sec cor proof}) due to its relation to other results in the literature which we will explain below.  Let $H$ be the isometry group of a proper $\CAT(0)$ space $X$.  Recall that a \emph{uniform lattice} $\Gamma$ in $H$ is a discrete subgroup of $H$ such that $X/\Gamma$ is compact.

\begin{corollary} \label{cor lattices}
Let $H$ be one of $\SO(n,1)$, $\SU(n,1)$, $\Sp(n,1)$ or $\Ffour$ with $n\geq 2$ and let $T$ be the automorphism group of a locally finite tree.  Suppose $T$ is non-discrete and cocompact.  If $\Gamma$ is a uniform lattice in $H\times T$, then $\Gamma$ is asynchronously automatic.
\end{corollary}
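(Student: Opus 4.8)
The plan is to verify the hypotheses of \Cref{prop:main}. Denote by $Y$ the symmetric space of $H$, that is the real, complex or quaternionic hyperbolic space $\RH^n$, $\CH^n$, $\HH^n$, or the Cayley hyperbolic plane $\OH$, in the four respective cases. In each case $Y$ is a rank-one symmetric space of non-compact type, hence a proper geodesic Gromov hyperbolic (indeed $\CAT(-1)$) space, and $H$ acts on it by isometries with compact point stabilisers. Let $X$ be the locally finite tree with $T=\Aut(X)$, and let $\Gamma$ act on $X$ through the projection $p\colon H\times T\to T$. Since $H\times T$ acts properly on $Y\times X$ via the product action and, by hypothesis, $X/T$ is finite, the assumption that $\Gamma$ is a uniform lattice in $H\times T$ implies that $\Gamma$ acts properly discontinuously and cocompactly on $Y\times X$. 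The goal is to show that the induced action of $\Gamma$ on $X$ is cocompact with hyperbolic vertex stabilisers, and then to invoke \Cref{prop:main}.

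Cocompactness on $X$ is immediate: the projection $Y\times X\to X$ is a continuous surjective $\Gamma$-equivariant map, so it descends to a continuous surjection from the compact space $(Y\times X)/\Gamma$ onto $X/\Gamma$; as $X$ is discrete, $X/\Gamma$ must be finite.

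For the vertex stabilisers, fix a vertex $v\in X$ and put $\Gamma_v=\{\gamma\in\Gamma:p(\gamma)\ \text{fixes}\ v\}$. The slice $Y\times\{v\}\subseteq Y\times X$ is preserved exactly by $\Gamma_v$, and restricting the $\Gamma$-action yields an isometric action of $\Gamma_v$ on $Y\cong Y\times\{v\}$. This action is properly discontinuous because for compact $C\subseteq Y$ one has $\{\gamma\in\Gamma_v:\gamma C\cap C\neq\emptyset\}\subseteq\{\gamma\in\Gamma:\gamma(C\times\{v\})\cap(C\times\{v\})\neq\emptyset\}$, a finite set; and it is cocompact because, taking a compact $D\subseteq Y\times X$ with $\Gamma\cdot D=Y\times X$, the set $D$ meets only finitely many slices (as $X$ is discrete), and a bookkeeping argument over the finitely many $\Gamma$-orbits of vertices extracts from $\Gamma\cdot D=Y\times X$ a compact $L\subseteq Y$ with $\Gamma_v\cdot L=Y$. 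Thus $\Gamma_v$ acts geometrically on the proper geodesic Gromov hyperbolic space $Y$, so by the \v{S}varc--Milnor lemma $\Gamma_v$ is finitely generated and quasi-isometric to $Y$; hence $\Gamma_v$ is a hyperbolic group. (The identical argument applied to edge slices $Y\times\{e\}$ shows edge stabilisers are hyperbolic too, so there is no harm in passing to a barycentric subdivision should \Cref{prop:main} be read as requiring the action to be without inversions.) Now \Cref{prop:main} applies to the $\Gamma$-action on $X$ and shows that $\Gamma$ is asynchronously automatic.

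The step requiring the most care is the cocompactness of the slice action: one must manufacture the compact set $L$ by bookkeeping over the finitely many vertex orbits and the finitely many slices that a compact set $D$ with $\Gamma\cdot D=Y\times X$ can meet. The remaining ingredients --- Gromov hyperbolicity of rank-one symmetric spaces, properness of the relevant actions, and the dictionary between ``uniform lattice in $H\times T$'' and ``acting geometrically on $Y\times X$'' --- are standard. Note that the non-discreteness of $T$ is not used in the argument; it serves, together with cocompactness, to ensure that such lattices $\Gamma$ exist and to connect the corollary to the other results discussed around it.
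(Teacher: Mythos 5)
Your argument is correct, but it takes a genuinely different route from the paper. The paper's proof is essentially a one-line reduction: it invokes Theorem~A of Hughes (2021), which says that such a lattice $\Gamma$ splits as a graph of groups whose local groups are covirtually isomorphic to uniform $H$-lattices (finite normal subgroup with quotient a uniform lattice in $H$); these act geometrically on the rank-one symmetric space, hence are hyperbolic, and \Cref{prop:main} applies. You instead reprove the needed input from scratch: you let $\Gamma$ act on the tree $X$ through the projection to $T$, check cocompactness by projecting a compact fundamental set, and show each vertex stabiliser $\Gamma_v$ acts geometrically on the slice $Y \times \{v\} \cong Y$ (the properness containment and the bookkeeping over the finitely many slices met by a compact set $D$ with $\Gamma \cdot D = Y \times X$ are exactly the right steps, and the \v{S}varc--Milnor lemma then gives hyperbolicity of $\Gamma_v$, finite kernels and non-faithfulness causing no harm). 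What the citation buys the paper is stronger structural information --- the vertex groups are identified covirtually as actual uniform $H$-lattices, and the splitting theorem is where the non-discreteness of $T$ enters --- whereas your argument is elementary, self-contained, only needs that $Y$ is Gromov hyperbolic, and, as you correctly observe, does not use non-discreteness of $T$ at all. Your parenthetical about edge slices is also a sensible precaution, since subdividing to remove inversions turns setwise edge stabilisers into vertex stabilisers.
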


Any $H$ as in the previous corollary has an associated rank one symmetric space isometric to $\RH^n$, $\CH^n$, $\HH^n$, or $\OH$.  In particular, $\Gamma$ is quasi-isometric to the product of a hyperbolic symmetric space and a tree.  Now, A.~Margolis \cite[Theorem~J]{Margolis2022disc} has proven that any group $\Lambda$ quasi-isometric to a product of hyperbolic graphs $\prod_{i=1}^n X_i$ is biautomatic, provided that none of the $X_i$ are quasi-isometric to a (possibly Euclidean) non-compact symmetric space.  Note that by \cite[Theorem~1.1]{LearyMinasyan2021} and \cite[Theorem~A]{HughesValiunas2022} this result of Margolis is sharp, namely, there exist uniform lattices in both $\Isom(\EE^2)\times T_{10}$ and $\PSL_2(\RR)\times T_{24}$ which are not biautomatic.

Combining the previous corollary with the main result of \cite{HughesValiunas2022} we obtain a group with a strange combination of properties.  In particular, the group $\Gamma$ has very strong non-positive curvature properties: being a hierarchically hyperbolic group, a $\CAT(0)$ group, acting geometrically on an injective metric space, and therefore having quadratic Dehn function.  But $\Gamma$ fails to be biautomatic.  Here we show $\Gamma$ satisfies the weaker property of being asynchronously automatic. This gives the first example of a group satisfying any of the previously mentioned geometric properties which is not biautomatic but is asynchronously automatic.

\begin{thm}\label{cor our boi}
There exists a torsion-free non-residually finite uniform lattice $\Gamma < \PSL_2(\RR)\times T_{24}$ which is a hierarchically hyperbolic group, an injective group, a $\CAT(0)$ group, and is not biautomatic. However, $\Gamma$ is asynchronously automatic.
\end{thm}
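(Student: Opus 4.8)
The statement of \Cref{cor our boi} is essentially a bookkeeping combination of \Cref{cor lattices} with the construction and properties established in \cite{HughesValiunas2022}. The plan is therefore to recall the specific lattice $\Gamma < \PSL_2(\RR) \times T_{24}$ constructed there, cite the non-positive-curvature properties already verified in that paper (hierarchically hyperbolic, injective, $\CAT(0)$, not biautomatic, torsion-free, non-residually finite), and then apply \Cref{cor lattices} to obtain asynchronous automaticity. So essentially all of the content is imported, and the new ingredient is only the last clause.

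\emph{First step: invoke the existing construction.} I would begin by citing \cite[Theorem~A]{HughesValiunas2022} (or whichever labelled result there produces the relevant example) to fix a torsion-free non-residually finite uniform lattice $\Gamma < \PSL_2(\RR) \times T_{24}$ that is simultaneously a hierarchically hyperbolic group, an injective group, and a $\CAT(0)$ group, and is not biautomatic. Nothing needs to be reproved here; the point is just to record that a single group carries all these properties at once. One should double-check that the lattice there is genuinely of the form needed for \Cref{cor lattices}, i.e.\ that $T_{24}$ (the automorphism group of the relevant locally finite tree, or a cocompact non-discrete subgroup thereof) plays the role of the factor $T$ and that $\PSL_2(\RR)$ sits as the isometry group of $\RH^2$, which is the rank-one symmetric space case $\SO(2,1)$ up to the usual local isomorphism $\PSL_2(\RR) \cong \SO(2,1)^\circ$.

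\emph{Second step: apply the corollary.} With $H = \PSL_2(\RR)$ (equivalently, the relevant form of $\SO(2,1)$, which has rank one) and $T$ the non-discrete cocompact tree automorphism group, \Cref{cor lattices} applies verbatim to the lattice $\Gamma < H \times T$ and yields that $\Gamma$ is asynchronously automatic. This is the only genuinely new line of the argument, and it is immediate once the hypotheses of \Cref{cor lattices} are seen to be met.

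\emph{Expected obstacle.} There is no serious mathematical obstacle: the proof is a one-paragraph assembly. The only care needed is editorial --- making sure the lattice cited from \cite{HughesValiunas2022} is exactly the one to which \Cref{cor lattices} applies (correct tree $T_{24}$, correct rank-one factor), and that the properties attributed to $\Gamma$ (torsion-freeness, non-residual finiteness, HHG, injective, $\CAT(0)$, non-biautomaticity) are indeed all established there for that same group rather than for a family of variants. Once those citations are pinned down, the proof reads: \textquotedblleft Let $\Gamma$ be the lattice of \cite[Theorem~A]{HughesValiunas2022}; all listed properties except asynchronous automaticity are proved there, and asynchronous automaticity follows from \Cref{cor lattices}.\textquotedblright
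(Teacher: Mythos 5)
Your proposal is correct and is essentially the paper's own argument: the paper likewise takes the lattice $\Gamma < \PSL_2(\RR)\times T_{24}$ of \cite{HughesValiunas2022}, imports torsion-freeness, non-residual finiteness, the HHG, injective and $\CAT(0)$ properties and non-biautomaticity from there, and deduces asynchronous automaticity from \Cref{cor lattices} (equivalently, directly from \Cref{prop:main}, since the vertex stabilisers are uniform $\PSL_2(\RR)$-lattices and hence hyperbolic). Your editorial caveat about matching $\PSL_2(\RR)$ with the rank-one group $\SO(2,1)$ in the hypotheses of \Cref{cor lattices} is exactly the only point needing care, and the paper handles it the same way.
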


As far as the authors are aware, this is one of the first examples of a quadratic Dehn function group which is asynchronously automatic but not biautomatic.  The only other examples are some free-by-cyclic groups claimed to be not automatic by Brady, Bridson, and Reeves \cite{BradyBridsonReeves2006} announced in Bridson's ICM notes \cite{Bridson2006}.

We raise the following set of questions.  

\begin{question} \label{q:async}
Is every (a) hierarchically hyperbolic group, (b) injective group, or (c) $\CAT(0)$ group 
asynchronously automatic?
\end{question}

Note that all of the groups mentioned in Question~\ref{q:async} have a quadratic Dehn function.  However, there exist groups with a quadratic Dehn function, such as the higher Heisenberg groups \cite{Allcock}, which are nilpotent and not virtually abelian, and therefore cannot be asynchronously automatic \cite[Theorem~8.2.8]{EpsteinEtAl1992}.

Towards (a), one may wish to try to extend \Cref{prop:main} from trees to hyperbolic graphs.  The main obstruction to doing this is that one would need a set of preferred paths in the graph along with a compatible regular transversal of the vertex and edge stabilisers.  It is not at all clear to us if this follows from hyperbolicity.  

We do not know of any possible candidate counterexamples to the first two questions; however, the Leary--Minasyan groups introduced in \cite{LearyMinasyan2021} (and related groups \cite{Hughes2021,Valiunas2023,ShepherdValiunas2025}) may be a good starting point to disprove (c).  Note that by \cite{Button2022} the Leary--Minasyan groups are not hierarchically hyperbolic groups.  They were classified up to isomorphism in \cite{Valiunas2022}.

\begin{question}
Which Leary--Minasyan groups are asynchronously automatic?
\end{question}

\subsection*{Acknowledgements}
This work has received funding from the European Research Council (ERC) under the European Union's Horizon 2020 research and innovation programme (Grant agreement No.\ 850930), as well as the National Science Centre (Poland) grant No.\ 2022/47/D/ST1/00779.  The authors would like to thank Martin Bridson and Sarah Rees for helpful correspondence, as well as the anonymous referee for their comments.

\section{Preliminaries} \label{sec:prelims}

\subsection{Automata}
The definitions in this section are standard and have been taken from \cite[Chapters 1 and 7]{EpsteinEtAl1992}.

Let $A$ be a finite set and let $A^\star$ be the free monoid generated by $A$.  We denote the nullstring by $\epsilon$. A \emph{language} over the alphabet $A$ is a subset $\call \subseteq A^\star$. Let $W_L,W_R$ be words over $A$.  A \emph{shuffle} of $(W_L,W_R)$ is a string $W\in A^\star$ and a map $\{1,\dots,|W|\}\to \{L,R\}$ such that if we substitute the nullstring $\epsilon$ in $W$ for each element that maps to $R$ we get $W_L$ and if we substitute $\epsilon$ in $W$ for each element that maps to $L$ we get $W_R$.

\begin{defn}[Finite state automaton]
A \emph{finite state automaton} (FSA) $\calm$ over the alphabet $A$ consists a finite directed graph $\calg(\calm)$, together with a (directed) edge label function $\ell\colon E^+(\calg(\calm)) \to A$, a chosen vertex $o \in V(\calg(\calm))$ called the \emph{initial state} and a subset $F \subseteq V(\calg(\calm))$ of \emph{final states}. The vertices of $\calg(\calm)$ are often referred to as \emph{states}. 

Let $\calm$ be an FSA over an alphabet $A$.  We say a string $W\in A^\star$ is \emph{accepted} by $\calm$ if and only if there is an oriented path $\gamma$ in $\calg(\calm)$ starting from $o$ and ending in a vertex $q \in F$ such that $\gamma$ is labelled by $W$.  A language $\call$ over $A$ is \emph{regular} if and only if there exists an FSA $\calm$ such that $\call$ coincides with the strings of $A^\star$ accepted by $\calm$.  We denote the regular language accepted by $\calm$ by $\call(\calm)$.
\end{defn}

\begin{defn}[Asynchronous automaton]
An \emph{asynchronous (deterministic two-tape) automaton} $\calm$ over $A$ is a partial deterministic automaton over $A\cup\{\$\}$ where the states are partitioned into five subsets, denoted $S_L$, $S_L^\$$, $S_R$, $S_R^\$$ and $S^\$$.  The set $S^\$$ consists of exactly one state $s^\$$ which will be the unique final state for $\calm$.  A directed edge $e$ labelled by an element of $A$ with initial vertex in $S_L\cup S_R$ has its terminal vertex in $S_L\cup S_R$; if such an edge $e$ (labelled by an element of $A$) has initial vertex in $S_L^\$$ or $S_R^\$$, then it has its terminal vertex in the same set.  A directed edge $e$ labelled by $\$$ with initial vertex in $S_L$ has terminal vertex in $S_R^\$$, and similarly with $S_R$ and $S_L^\$$; if such an edge $e$ (labelled by $\$$) has initial vertex in $S_L^\$\cup S_R^\$$, then its terminal vertex is $s^\$$.

We say that $\calm$ \emph{accepts} a pair of strings $(W_L,W_R)\in A^\star \times A^\star$ if there is a shuffle $W$ of $(W_L\$,W_R\$)$ which is accepted by the automaton $\calm$.
\end{defn}

\subsection{Automaticity}
We are interested in studying when a group $\Gamma$ is asynchronously automatic; we briefly introduce the necessary definitions and basic results on the property below, and refer the interested reader to \cite{EpsteinEtAl1992} for a more comprehensive account.

Let $\Gamma$ be a group with a finite generating set $A$. We view $A$ as a finite set together with a function $\pi_A^0\colon A \to \Gamma$ that extends to a surjective monoid homomorphism $\pi_A\colon A^\star \to \Gamma$, where $A^\star$ is the free monoid on $A$.  We say that a word $W \in A^\star$ \emph{labels} or \emph{represents} the element $\pi_A(W) \in \Gamma$. For simplicity, we will assume that $A$ is symmetric, namely, $\pi_A(A) = \pi_A(A)^{-1}$, and contains the identity, that is $\pi_A(1) = 1_{\Gamma}$ for an element $1 \in A$. We denote by $d_A$ the combinatorial metric on the Cayley graph $\cay(\Gamma,A)$ of $\Gamma$.

We study combinatorial paths in $\cay(\Gamma,A)$. Given a path $P$ in $\cay(\Gamma,A)$ and an integer $t \in \{ 0, \ldots, |P| \}$, where $|P|$ is the length of $P$, we denote by $P(t) \in \Gamma$ the $t$-th vertex of $P$, so that $P(0)$ and $P(|P|)$ are the starting and ending vertices of $P$, respectively. We further define $P(t) \in \Gamma$ for any $t \in \ZZ_{\geq 0} \cup \{\infty\}$ by setting $P(t) = P(|P|)$ whenever $t > |P|$.

Given a word $W \in A^\star$, we denote by $\widehat{W}$ the path in $\cay(\Gamma,A)$ starting at $1_{\Gamma}$ and labelled by~$W$. In particular, for any $t \in \ZZ_{\geq 0}$, we write $\widehat{W}(t)$ for the element of $\Gamma$ represented by the prefix of $W$ of length $\min\{t,|W|\}$.

\begin{defn}[Asynchronously automatic group]
Let $\Gamma$ be a group with finite symmetric generating set $A$ containing the identity.  An \emph{asynchronous automatic structure} on $\Gamma$ consists of the set $A$, a finite state automaton $\calm$ over $A$, and asynchronous automata $\calm_x$ for $x\in A$ satisfying:
\begin{enumerate}
    \item The map $\pi_A|_{\call(\calm)}\colon \call(\calm)\to \Gamma$ is surjective;
    \item for $x\in A$, we have $(W_L,W_R)\in\call(\calm_x)$ if and only if $\pi_A(W_L x)=\pi_A(W_R)$ and both $W_L$ and $W_R$ are elements of $\call(\calm)$.
\end{enumerate}
We say $\Gamma$ is an \emph{asynchronously automatic group} if $\Gamma$ admits an asynchronous automatic structure.
\end{defn}

\begin{defn}[Departure function]
Let $\Gamma$ be a group with finite symmetric generating set $A$ containing the identity and let $\call$ be a regular language over $A$ that maps onto $\Gamma$.  A \emph{departure function}  for $(\Gamma,A,\call)$ is any function $\cald\colon \ZZ_{\geq 0} \to \ZZ_{\geq 0}$ such that if $W\in\call$, $r,s\geq 0$, $t\geq \cald(r)$, and $s+t\leq |W|$, then $d_A(\widehat{W}(s),\widehat{W}(s+t))\geq r$.
\end{defn}

The next definition is really a characterisation of boundedly asynchronous groups proven in \cite[Theorem~7.2.8]{EpsteinEtAl1992} (see also \cite[Proposition on p.~309]{Mosher1995}).

\begin{defn}[Boundedly asynchronous group]
Let $\Gamma$ be a group with finite symmetric generating set $A$ containing the identity and let $\call$ be a regular language over $A$ that maps onto $\Gamma$ under $\pi_A$.  We say $(A,\call)$ is a \emph{boundedly asynchronous structure} if
\begin{enumerate}[label=(\alph*)]
    \item there exists a departure function $\cald$ for $(\Gamma,A,\call)$;
    \item there exists a constant $\kappa > 0$, such that, for every pair of strings $V,W\in \call$ with $d_A(\pi_A(V),\pi_A(W))\leq 1$, we have that the Hausdorff distance of the paths $\widehat{V}$ and $\widehat{W}$ is at most $\kappa$.
\end{enumerate}
We say $\Gamma$ is \emph{boundedly asynchronous} if $\Gamma$ admits a boundedly asynchronous structure.
\end{defn}

The definitions of asynchronously automatic group and boundedly asynchronous group are equivalent:  every boundedly asynchronous group is immediately an asynchronously automatic group and by \cite[Theorem~7.2.4]{EpsteinEtAl1992} if a group admits an asynchronously automatic structure, then it also admits a boundedly asynchronous structure.

\subsection{Graphs of groups} \label{ssec:graphs-of-groups}

We are interested in groups acting on simplicial trees. We outline the main results we are using below, and refer the interested reader to \cite{SerreTrees} for a more comprehensive account.

We first introduce the notion of graphs of groups and their fundamental groups. Given a finite undirected graph $\calg$ (with loops and multiple edges allowed), we write $V(\calg)$ and $E^+(\calg)$ for its sets of vertices and directed edges, respectively. We also write $E^+(\calg) \to E^+(\calg), e \mapsto \overline{e}$ for the function changing the direction of edges, and $\iota\colon E^+(\calg) \to V(\calg)$ for the starting vertex function, so that an edge $e$ has endpoints $\iota(e)$ and $\iota(\overline{e})$.

\begin{defn}[Fundamental group of graph of groups]
A \emph{graph of groups} is a connected finite undirected graph $\calg$ together with a collection of groups $\{ \Gamma_v \mid v \in V(\calg) \}$, a collection of groups $\{ \Gamma_e \mid e \in E^+(\calg) \}$ satisfying $\Gamma_e = \Gamma_{\overline{e}}$, and a collection of injective homomorphisms $\{ i_e\colon \Gamma_e \to \Gamma_{\iota(e)} \mid e \in E^+(\calg) \}$.

Given a graph of groups $(\calg,\{\Gamma_v\},\{\Gamma_e\},\{i_e\})$ as above, its \emph{fundamental group} is the group generated by $\left( \bigsqcup_{v \in V(\calg)} \Gamma_v \right) \sqcup E^+(\calg)$ with defining relations $\overline{e} = e^{-1}$ for $e \in E^+(\calg)$, $i_{\overline{e}}(g) = e^{-1} i_e(g) e$ for $e \in E^+(\calg)$ and $g \in \Gamma_e$, and $e = 1$ for $e \in E^+(T_{\calg})$, where $T_{\calg}$ is a maximal subtree of $\calg$.
\end{defn}

Let $(\calg,\{\Gamma_v\},\{\Gamma_e\},\{i_e\})$ be a graph of groups, and let $\Gamma$ be its fundamental group. Given $e \in E^+(\calg)$, let $\cals(e)$ be a left transversal of $\Gamma_e$ in $\Gamma_{\iota(e)}$ containing $1_\Gamma \in \Gamma$; pick also a base vertex $c \in V(\calg)$. Let $\calw_{\calg}$ be the set of all expressions of the form $s_1 e_1 s_2 e_2 \cdots s_n e_n$, where $e_1 e_2 \cdots e_n$ is a directed loop in $\calg$ based at~$c$, and $s_i \in \cals(e_i)$ with $s_i \neq 1_\Gamma$ whenever $e_{i-1} = \overline{e_i}$. It is then well-known \cite[Theorem~11 on p.~45]{SerreTrees} that every element $g \in \Gamma$ has a unique expression of the form $g = Uh$ with $U\in \calw_{\calg}$ and $h \in \Gamma_c$. We call such an expression the \emph{normal form} for $g$.

Now let $\Gamma$ be a group acting cocompactly on a simplicial tree $\calt$. Without loss of generality (subdividing edges of $\calt$ if necessary), we may assume that the action is without edge inversions, i.e.\ any element of $\Gamma$ fixing an edge of $\calt$ also fixes its endpoints. In that case, using Bass--Serre theory (see \cite[Theorem~13 on p.~55]{SerreTrees}), $\Gamma$ can be described as the fundamental group of a graph of groups $(\calg,\{\Gamma_v\},\{\Gamma_e\},\{i_e\})$, where $\calg = \calt/\Gamma$, the groups $\Gamma_v$ and $\Gamma_e$ are the stabilisers (under the action of $\Gamma$) of lifts $\widetilde{v} \in V(\calt)$ and $\widetilde{e} \in E^+(\calt)$ of $v \in V(\calg)$ and $e \in E^+(\calg)$, respectively, and $i_e\colon \Gamma_e \to \Gamma_{\iota(e)}$ is the inclusion of $\Gamma_e$ into the stabiliser of $\iota(\widetilde{e})$ composed with an inner automorphism of $\Gamma$. Moreover, if $g = Uh$ is a normal form of $g \in \Gamma$, where $U = s_1 e_1\cdots s_n e_n \in \calw_{\calt}$ and $h \in \Gamma_c$, then $e_1 \cdots e_n$ is the projection of the geodesic path in $\calt$ from $\widetilde{c}$ to $g \cdot \widetilde{c}$; here and later, we write $\calw_\calt$ for $\calw_\calg$.

\section{Proof of Proposition~\ref{prop:main}} \label{sec:proof}

Suppose a group $\Gamma$ acts cocompactly on a locally finite tree $\calt$ without edge inversions (so that we can use the notation of Section~\ref{ssec:graphs-of-groups}), and suppose that the vertex stabilisers under this action are (Gromov) hyperbolic. Let $B$ be a finite symmetric generating set of $\Gamma_c$, and write $\calw_B \subseteq B^\star$ for the set of all geodesic words over $B$. Consider the generating set $A' := E^+(\calg) \sqcup B \sqcup \bigsqcup_{e \in E^+(\calg)} \cals(e)$ for $\Gamma$ and its ``symmetric closure'' $A = A' \sqcup (A')^{-1}$; note that since $\calt$ is locally finite, we have $|\cals(e)| < \infty$ for all $e \in E^+(\calg)$, and hence $A'$ (and therefore $A$) is finite. Consider the language
\[
\call \coloneqq \{ U_{\calt} U_B \mid U_{\calt} \in \calw_{\calt}, U_B \in \calw_B \}
\]
of words over $A$.

We note that $\call$ is a regular language over $A$: see Figure~\ref{fig:automaton}. We also note that there are finitely many words in $\call$ representing any given element of $g \in \Gamma$, since the normal form for $g$ is unique and there are finitely many geodesic words over $B$ representing any given element of $\Gamma_c$. We claim that $(A,\call)$ is an boundedly asynchronous structure for $\Gamma$.

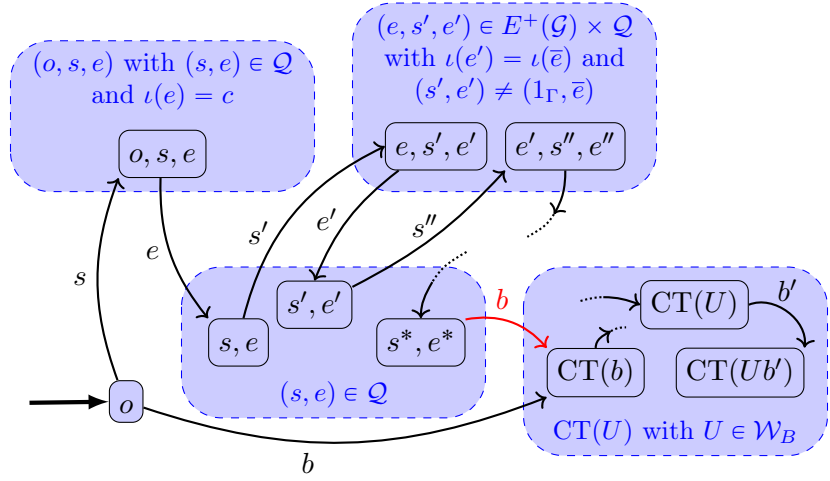
\begin{figure}[ht]
    \centering
\begin{tikzpicture}
\draw [blue,dashed,fill=blue!20,rounded corners=15] (-1,1) rectangle (3,3);
\node [blue,below] at (1,3) {\parbox{4cm}{\centering\small $(o,s,e)$ with $(s,e) \in \calq$ and $\iota(e)=c$}};
\draw [blue,dashed,fill=blue!20,rounded corners=15] (1.25,0) rectangle (5.25,-2);
\node [blue,above] at (3.25,-2) {\parbox{4cm}{\centering\small $(s,e) \in \calq$}};
\draw [blue,dashed,fill=blue!20,rounded corners=15] (3.5,1) rectangle (7.5,3.5);
\node [blue,below] at (5.5,3.5) {\parbox{4cm}{\centering\small $(e,s',e') \in E^+(\calg) \times \calq$ with $\iota(e')=\iota(\overline{e})$ and $(s',e') \neq (1_{\Gamma},\overline{e})$}};
\draw [blue,dashed,fill=blue!20,rounded corners=15] (5.75,0) rectangle (9.75,-2.5);
\node [blue,above] at (7.75,-2.5) {\parbox{4cm}{\centering\small $\CT(U)$ with $U \in \calw_B$}};
\node [below left,draw,rounded corners,fill=blue!20] (o) at (0.75,-1.5) {$\vphantom{'}o$};
\node [draw,rounded corners] (ose) at (1,1.5) {$\vphantom{'}o,s,e$};
\node [draw,rounded corners] (se1) at (2,-1) {$\vphantom{'}s,e$};
\node [draw,rounded corners] (se2) at (3,-0.5) {$s',e'$};
\node [draw,rounded corners] (se3) at (4.4,-1) {$\vphantom{'}s^*,e^*$};
\node [draw,rounded corners] (ese1) at (4.6,1.6) {$e,s',e'$};
\node [draw,rounded corners] (ese2) at (6.3,1.6) {$e',s'',e''$};
\node [draw,rounded corners] (ct1) at (6.7,-1.4) {$\CT(b)$};
\node [draw,rounded corners] (ct2) at (8,-0.5) {$\CT(U)$};
\node [draw,rounded corners] (ct3) at (8.6,-1.4) {$\CT(Ub')$};
\draw [ultra thick,-{latex}] (-0.75,-1.8) -- (o);
\draw [thick,->] (o) to[bend left=20] node [midway,left] {$s$} (ose.south west);
\draw [thick,->] (ose) to[bend right=20] node [midway,left] {$e$} (se1.north west);
\draw [thick,->] (se1) to[bend left=25] node [pos=0.4,left] {$s'$} (ese1.west);
\draw [thick,->] (ese1) to[bend right=15] node [midway,left] {$e'$} (se2.north);
\draw [thick,->] (se2) to[bend right=10] node [pos=0.6,left] {$s''$} (ese2.south west);
\draw [thick,->] (ese2) to[out=-90,in=60] (6.15,0.7);
\draw [thick,densely dotted] (6.15,0.7) to[out=-120,in=30] (5.8,0.4);
\draw [thick,densely dotted] (5,0.2) to[out=-150,in=60] (4.55,-0.2);
\draw [thick,->] (4.55,-0.2) to[out=-120,in=90] (se3.north);
\draw [thick,->,red] (se3) to[bend left] node [pos=0.4,above] {$b$} (ct1.north west);
\draw [thick,->] (o) to[bend right=20] node [pos=0.4,below] {$b$} (ct1.south west);
\draw [thick,->] (ct1) to[out=90,in=-150] (6.9,-0.85);
\draw [thick,densely dotted] (6.9,-0.85) to[out=30,in=180] (7.1,-0.8);
\draw [thick,densely dotted] (6.5,-0.45) to[out=15,in=180] (6.8,-0.4);
\draw [thick,->] (6.8,-0.4) to[out=0,in=160] (ct2.west);
\draw [thick,->] (ct2.east) to[bend left=60] node [midway,above] {$b'$} (ct3.north east);
\end{tikzpicture}
    \caption{A finite state automaton over $A$ accepting $\call$. Here we set $\calq := \{ (s,e) \mid e \in E^+(\calg),s \in \cals(e) \}$. We use $b,b'$ for letters in $B$, and the red arrow exists if and only if $\iota(\overline{e^*}) = c$. We set $\CT(U) := \{ V \in B^* \mid UV \in \calw_B \}$ to be the \emph{cone type} of~$U$; it is well-known that since $\Gamma_c$ is hyperbolic it has finitely many cone types \cite[Theorem~3.2.1]{EpsteinEtAl1992}. The initial state of the automaton is $(o)$, and the final states are $(o)$, $(s,e) \in \calq$ with $\iota(\overline{e}) = c$, and $\CT(U)$ for $U \in \calw_B$.}
    \label{fig:automaton}
\end{figure}

\begin{lemma}
\label{lem:departure}
There exists a departure function $\cald$ for $(\Gamma,A,\call)$.
\end{lemma}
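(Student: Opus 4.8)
The goal is to produce a departure function $\cald$ for $(\Gamma,A,\call)$, that is, a function controlling how far apart two vertices $\widehat{W}(s)$ and $\widehat{W}(s+t)$ must be on the path traced by a word $W = U_\calt U_B \in \call$, once the ``time gap'' $t$ exceeds $\cald(r)$. The plan is to split the word $W$ at the interface between its tree part $U_\calt \in \calw_\calt$ and its hyperbolic part $U_B \in \calw_B$, and to handle each regime separately, using the distance estimate in $\calt$ for the first and the departure function of the hyperbolic group $\Gamma_c$ (which exists since geodesic words in a hyperbolic group satisfy a linear — in fact $\cald(r) = r$ — departure bound) for the second.

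First I would set up notation: write $W = U_\calt U_B$ with $U_\calt = s_1 e_1 \cdots s_n e_n$, so that $e_1 \cdots e_n$ is the projection to $\calg$ of the geodesic in $\calt$ from $\widetilde{c}$ to $\pi_A(U_\calt)\widetilde{c}$, as recorded in Section~\ref{ssec:graphs-of-groups}. The key geometric input is that the natural map $\Gamma \to \calt$, $g \mapsto g\widetilde{c}$, is coarsely Lipschitz and, crucially, that as one reads along $U_\calt$ the images in $\calt$ of the successive prefixes march along this geodesic, advancing by one edge of $\calt$ each time a letter $e_i$ is read and staying put while a transversal letter $s_i$ is read. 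Hence if a subword of $U_\calt$ spanned by the prefix-interval $[s,s+t]$ contains at least $N$ of the edge-letters $e_i$, the corresponding endpoints are moved to points of $\calt$ at distance $\geq N$, so (since each generator in $A$ moves $\widetilde{c}$ a bounded amount, say by at most $M := \max\{ d_\calt(\widetilde{c}, a\widetilde{c}) \mid a \in A\}$) they are at distance $\geq N/M$ in $\cay(\Gamma,A)$; more carefully, $d_\calt(\widehat{W}(s)\widetilde{c}, \widehat{W}(s+t)\widetilde{c}) \geq N$ because the projected path is geodesic in $\calt$, so it does not backtrack. The bounded number $L := \max_e |\cals(e)|$ of transversal letters between consecutive edge-letters means that any interval of $U_\calt$ of length $\geq (L+1)(N+1)$ contains at least $N$ edge-letters.

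Next, for the portion of $W$ lying in the $U_B$-part: there $\widehat{W}$ is (a translate of) a geodesic path in $\cay(\Gamma_c, B)$, and since $\Gamma_c$ is hyperbolic a geodesic word satisfies $d_B(\widehat{U_B}(s), \widehat{U_B}(s+t)) \geq t$ for $t \leq |U_B|$; comparing the metrics $d_B$ and $d_A$ (they are bi-Lipschitz on $\Gamma_c$ since $B \subseteq A$ up to a fixed constant coming from writing elements of $B$ in terms of $A$ and vice versa — here just $B\subseteq A$, so $d_A \leq d_B$ on $\Gamma_c$, which is the wrong direction, so one instead uses that $d_A$ restricted to $\Gamma_c$ is at least $d_B/\lambda$ for some $\lambda$ depending on how the finitely many generators in $A$ are expressed over $B$) gives a linear lower bound $d_A(\widehat{W}(s), \widehat{W}(s+t)) \geq t/\lambda - O(1)$ in this regime. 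The remaining case is an interval $[s,s+t]$ straddling the interface: then it splits as (tail of $U_\calt$) followed by (head of $U_B$), one of the two pieces has length $\geq t/2$, and whichever one it is we invoke the corresponding bound above — with the proviso that the endpoint lying in $U_B$ has moved far from $\widetilde{c}$ only along the geodesic already laid down, which one still controls because the composite path in $\calt$ does not shorten once we leave $\calt$: the $U_B$-letters fix the final vertex $\pi_A(U_\calt)\widetilde{c}$, so $d_\calt(\widehat{W}(s)\widetilde{c}, \widehat{W}(s+t)\widetilde{c})$ is exactly the number of edge-letters of $U_\calt$ in $[s, |U_\calt|]$, recovering the tree bound, while the hyperbolic bound handles movement within the stabiliser. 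Assembling the three cases, setting $\cald(r) := \lambda(L+1)(Mr+1) + C$ for a suitable constant $C$ absorbing the additive errors does the job.

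The main obstacle I anticipate is the straddling case and, more precisely, keeping honest track of two different metrics (the tree metric $d_\calt$ and the word metric $d_A$) and the interaction between ``progress in $\calt$'' and ``progress in $\Gamma_c$'': one must be sure that reading the $U_B$-part does not allow the $\calt$-image to come back towards $\widehat{W}(s)\widetilde{c}$ (it cannot, since those letters all lie in $\Gamma_c = \mathrm{Stab}(\widetilde{c}) \cdot (\text{conjugating word})$ and hence fix the relevant vertex), and symmetrically that reading the $U_\calt$-part before the interface cannot be ``undone'' — which again holds because the projected path is a $\calt$-geodesic and so is non-backtracking. Once that non-backtracking observation is firmly in place, the estimate is a bookkeeping matter, and the explicit form of $\cald$ above (or any honest linear-in-$r$ function) works; one does not even need sharp constants, only that $\cald$ is finite-valued, which is automatic.
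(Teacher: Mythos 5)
Your overall strategy (direct geometric estimates, splitting $W=U_\calt U_B$ and treating the tree part and the $\Gamma_c$ part separately) is different from the paper's, but as written it has a genuine gap in the $U_B$ regime: you assert that $d_A$ restricted to $\Gamma_c$ is bounded below by $d_B/\lambda$, i.e.\ that the vertex group $\Gamma_c$ is undistorted in $\Gamma$, and you conclude a \emph{linear} departure function $\cald(r)=\lambda(L+1)(Mr+1)+C$. This is false in exactly the situations the proposition is designed for. Take $\Gamma = \mathrm{BS}(1,2)=\langle a,t\mid tat^{-1}=a^2\rangle$ acting on its Bass--Serre tree, with $\Gamma_c=\langle a\rangle$ and $B=\{a^{\pm1},1\}$: the word $W=a^{2^n}$ lies in $\call$ (empty tree part, geodesic word over $B$), has length $t=2^n$, yet $d_A\bigl(\widehat{W}(0),\widehat{W}(2^n)\bigr)\leq 2n+1$ since $a^{2^n}=t^nat^{-n}$. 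So no linear (indeed no subexponential) function can be a departure function here, and the ``expressing the generators of $A$ over $B$'' justification cannot be repaired: the letters of $A$ outside $B$ do not lie in $\Gamma_c$, and $d_A$-geodesics between elements of $\Gamma_c$ leave $\Gamma_c$ --- that is precisely distortion. A secondary issue is the straddling case: splitting at $t/2$ and invoking ``whichever piece is long'' does not work once the $\Gamma_c$-bound is only a slowly growing function of the length, since subtracting $|R_\calt|\approx t/2$ via the triangle inequality can destroy the estimate; the threshold for the tree piece must depend on $r$, not on $t$.

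The gap is fixable, but only by abandoning linearity: since $A$ is finite, the ball $\{g\in\Gamma \mid d_A(1,g)<r\}$ is finite, so $f(r):=\max\{d_B(1,g)\mid g\in\Gamma_c,\ d_A(1,g)<r\}$ is finite, and $f$ (not $t/\lambda$) is what controls the $U_B$ regime; the tree-part estimate and the (re-thresholded) straddling case then go through with bounded-error bookkeeping (note also that reading a transversal letter $s_i$ moves the orbit point $\widetilde{c}$ by a bounded, not zero, amount, and that there is exactly one transversal letter between consecutive edge letters --- $|\cals(e)|$ is the size of the transversal, not a syllable length). The paper avoids all of this metric bookkeeping with a counting argument: since normal forms are unique and a hyperbolic group has finitely many geodesic words per element, each $g\in\Gamma$ has finitely many representatives in $\call$; hence for any two states of an automaton accepting $\call$ the set of connecting subwords representing a fixed $g$ is finite, and taking a maximum over the finite $d_A$-ball of radius $r$ defines $\cald(r)$ directly. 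That argument makes no undistortedness assumption and produces the (necessarily non-linear, in general) departure function for free, which is exactly what your $\mathrm{BS}(1,2)$-type examples require.
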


\begin{proof}
Let $\calm$ be a finite state automaton over $A$ accepting $\call$. Suppose, without loss of generality (removing states from $\calm$ if necessary), that each state $s \in \calm$ is accessible (there exists a word $U_s \in A^\star$ labelling a path from the initial state of $\calm$ to $s$) and live (there exists a word $W_s \in A^\star$ labelling a path from $s$ to a final state of $\calm$). Given any two states $s,t \in V(\calg(\calm))$ and an element $g \in \Gamma$, write $\calv_{s,t}(g)$ for the set of words $V \in A^\star$ labelling a path in $\calm$ from $s$ to $t$ such that $\pi_A(V) = g$.

Now, as there are finitely many words in $\call$ representing any given element of $\Gamma$, it follows that $\calv_{s,t}(g)$ is finite for all $s$, $t$ and $g$, since $\{ U_s V W_t \mid {V \in \calv_{s,t}(\gamma)} \}$ is a collection of words in $\call$ representing $\pi_A(U_s) g \pi_A(W_t) \in \Gamma$. In particular, since $A$ and the number of states in $\calm$ are finite, the set $\calu_r := \bigcup \{ \calv_{s,t}(g) \mid s,t \in V(\calg(\calm)), g \in \Gamma, d_A(1,g) < r \}$ is also finite for any $r \geq 0$. We can therefore define a function $\cald\colon \ZZ_{\geq 0} \to \ZZ_{\geq 0}$ by setting $\cald(r)$ to be larger than the length of any word in~$\calu_r$. It is then clear from the construction that $\cald$ is a departure function for $(\Gamma,A,\call)$, as required.
\end{proof}

\begin{prop} \label{prop:Hausdorff}
There exists a constant $\kappa>0$ with the following property: for every $V,W\in \call$ with $d_A(\pi_A(V),\pi_A(W))\leq 1$, the Hausdorff distance between $\widehat{V}$ and $\widehat{W}$ is at most $\kappa$.
\end{prop}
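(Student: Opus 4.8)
The plan is as follows. Write $D_0:=\max_{x\in A}d_{\calt}(\widetilde c,x\widetilde c)$, which is finite since $A$ is; let $\delta$ be a hyperbolicity constant and $\mu$ a quasi-geodesic stability constant for $\cay(\Gamma_c,B)$; and note that, $\calt$ being locally finite and the action cocompact, every edge group has finite index in its vertex groups, so each edge monomorphism $i_e\colon\Gamma_e\to\Gamma_{\iota(e)}$ is a quasi-isometric embedding with coarsely dense image, and each transversal $\cals(e)$ is a finite subset of $A$. Fix $V=U_{\calt}U_B$ and $W=U'_{\calt}U'_B$ in $\call$ with $g:=\pi_A(V)$, $g':=\pi_A(W)$ and $d_A(g,g')\le1$, and put $x:=g^{-1}g'\in A$, so $d_{\calt}(g\widetilde c,g'\widetilde c)\le D_0$. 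First I would treat the ``tree part''. Since the edge word of $U_{\calt}$ is the $\calg$-projection of the geodesic $[\widetilde c,g\widetilde c]$ in $\calt$ (and similarly for $U'_{\calt}$), since two geodesics issuing from $\widetilde c$ in a tree with endpoints at distance $\le D_0$ agree on all but their last $\le D_0$ edges, and since --- a standard strengthening of this --- the length-$k$ prefix $s_1e_1\cdots s_ke_k$ of a Bass--Serre normal form depends only on the length-$k$ prefix of the associated tree geodesic, the words $U_{\calt}$ and $U'_{\calt}$ share a longest common prefix $P$, with $U_{\calt}=PQ$, $U'_{\calt}=PQ'$ and $|Q|,|Q'|\le 2D_0$. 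Setting $p:=\pi_A(P)$, $q:=\pi_A(Q)$, $q':=\pi_A(Q')$, the paths $\widehat V$ and $\widehat W$ coincide along $\widehat P$, the portions labelled $Q$ and $Q'$ stay within $d_A$-distance $2D_0$ of the common vertex $p$, and everything reduces to bounding the Hausdorff distance between the vertex-group legs $L:=\widehat{U_B}$ (from $pq$ to $g$) and $L':=\widehat{U'_B}$ (from $pq'$ to $g'$), where $d_A(pq,pq')\le 4D_0$.

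Next I would translate by $(pq)^{-1}$ and regard $L$ as the $B$-geodesic from $1$ to $h:=\pi_A(U_B)$ in $\cay(\Gamma_c,B)$ and $L'$ as $\eta\cdot(B\text{-geodesic from }1\text{ to }h')$, where $h':=\pi_A(U'_B)\in\Gamma_c$ and $\eta:=(pq)^{-1}(pq')$, which satisfies $d_A(1,\eta)\le 4D_0$, moves $\widetilde c$ a distance $\le D_0$, and obeys $\eta h'=hx$. If $x\in\Gamma_c$ this is quick: then $g\widetilde c=g'\widetilde c$, hence $U_{\calt}=U'_{\calt}$ and $pq=pq'$, so $h'=hx$ with $d_B(h,h')$ bounded, and two $B$-geodesics of the $\delta$-hyperbolic graph $\cay(\Gamma_c,B)$ sharing one endpoint and having the others boundedly apart are Hausdorff close. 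For the general case the relevant consequence of local finiteness is a ``fibre closeness'' statement: as the ball of radius $D_0$ about $\widetilde c$ in $\calt$ is finite, there is a constant $\kappa_0=\kappa_0(A)$ so that whenever $v,v'\in\Gamma\widetilde c$ have $d_{\calt}(v,v')\le D_0$, the cosets $\{\gamma\in\Gamma:\gamma\widetilde c=v\}$ and $\{\gamma:\gamma\widetilde c=v'\}$ are at Hausdorff distance $\le\kappa_0$ in $\cay(\Gamma,A)$.

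The heart of the argument, and the step I expect to be the main obstacle, is to promote this fibre closeness to an actual fellow-travelling of $L$ and $L'$ when $g\widetilde c\ne g'\widetilde c$: the legs then live in distinct cosets of $\Gamma_c$ whose far endpoints $h,h'$ are in general not close in $d_A$ or $d_B$, and one cannot invoke hyperbolicity of the ambient graph --- already for $\mathrm{BS}(1,2)$ the vertex group $\Gamma_c\cong\ZZ$ is exponentially distorted in $\Gamma$, so the legs are highly non-geodesic in $\cay(\Gamma,A)$. The plan is to match the legs vertex by vertex using the normal-form rewriting directly: given a vertex $pq'\beta'$ of $L'$ with $\beta'$ on the $B$-geodesic $1\to h'$, decompose $\beta'$ as a transversal element (from the finite set $A$) times an element of the relevant edge group, push the latter leftwards through the $\le D_0$ syllables of $Q'$ via the defining relations $i_{\overline{e}}(\gamma)=e^{-1}i_e(\gamma)e$, and so rewrite $pq'\beta'$ as $\gamma z$ with $z$ of bounded $d_A$-norm and $\gamma\in pq\,\Gamma_c$ obtained from $\beta'$ by a composition of edge monomorphisms. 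Since those monomorphisms are quasi-isometries onto coarsely dense subgroups, as $\beta'$ traverses the $B$-geodesic $1\to h'$ the element $\gamma$ traces a $B$-quasi-geodesic in the coset $pq\,\Gamma_c$ with endpoints boundedly close to $pq$ and $g$; by $\delta$-hyperbolicity of $\cay(\Gamma_c,B)$ and the constant $\mu$ this quasi-geodesic stays in a bounded $d_B$- (hence $d_A$-) neighbourhood of $L$, and since $d_A(pq'\beta',\gamma)=d_A(1,z)$ is bounded, every vertex of $L'$ lies boundedly close to $L$; a symmetric argument gives the reverse, and assembling the bounds (from $D_0$, the transversal lengths, $\mu$ and $\delta$) yields the constant $\kappa$. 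The genuinely delicate point --- and where the finiteness hypotheses are indispensable --- is to perform this rewriting uniformly in $\beta'$, bounding independently of $\beta'$ both the transversal/edge-group splittings occurring at the successive syllables of $Q'$ and the resulting correction term $z$, which is exactly what finiteness of $A$ and of the transversals provides.
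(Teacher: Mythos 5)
Your overall architecture is the paper's: split off the longest common prefix of the tree parts, bound the two divergent tails by a constant depending only on $\max_{a\in A}d_{\calt}(\widetilde{c},a\cdot\widetilde{c})$, project the remaining $B$-leg of $W$ into the coset $pq\Gamma_c$ containing the leg of $V$, check the projected points form a quasi-geodesic in $\cay(\Gamma_c,B)$, and finish with quasi-geodesic stability in the hyperbolic group $\Gamma_c$. The gap is in the mechanism you propose for the projection. Pushing the edge-group part of $\beta'$ leftwards through the syllables of $Q'$ (with a transversal correction extracted at each stage) moves you \emph{up} towards the divergence vertex: it rewrites $pq'\beta'$ as $p\beta''z$ with $\beta''$ in the stabiliser of the divergence vertex and $z$ of bounded length, i.e.\ it lands you near the coset $p\Gamma_{v_j}$, not in $pq\Gamma_c$ as claimed. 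A general element of $p\Gamma_{v_j}$ is \emph{not} boundedly close to $pq\Gamma_c$ (such an element can move $pq\cdot\widetilde{c}$ arbitrarily far in $\calt$), and you cannot in general continue the rewriting back down along $Q$, since there is no reason for $\beta''$ (after transversal extraction) to lie in the images of the edge monomorphisms along $Q$. Consequently the assertion that $\gamma$ is ``obtained from $\beta'$ by a composition of edge monomorphisms'', and the ensuing claim that $\gamma$ traces a uniform $B$-quasi-geodesic because those monomorphisms are quasi-isometries, are not established by the construction you describe.

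The repair is your own ``fibre closeness'' observation, applied directly: every vertex $pq'\beta'$ of $L'$ satisfies $pq'\beta'\cdot\widetilde{c}=pq'\cdot\widetilde{c}$, which is within bounded tree-distance of $pq\cdot\widetilde{c}$, so each such vertex is uniformly $d_A$-close to $pq\Gamma_c$ with no rewriting at all. This is exactly what the paper does: since $\widehat{W}(t)\cdot\widetilde{c}=\pi_A(W)\cdot\widetilde{c}$ is tree-distance at most $\eta$ from $\pi_A(V)\cdot\widetilde{c}$, the Bass--Serre normal form of $\widehat{W}(t)^{-1}\pi_A(V)$ has tree part with at most $\eta$ syllables, giving a point $g_t\in\pi_A(V)\Gamma_c$ with $d_A(\widehat{W}(t),g_t)\leq 2\eta$ (and the paper also handles the vertices of $\widehat{W}$ lying in its tree portion, which your sketch dismisses but which need the same treatment or the explicit bound the paper gives). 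After projecting, the quasi-geodesicity of the sequence still needs its own argument: the paper bounds consecutive projected points in $d_B$ using only that elements of $\Gamma_c$ of bounded $d_A$-length have bounded $d_B$-length (finiteness of balls in $\cay(\Gamma,A)$ --- no undistortion of $\Gamma_c$ is used or available), and your commensurability point (finite index of edge groups, hence of stabilisers of nearby vertices) is indeed the relevant input for the lower quasi-geodesic bound, but it must be argued for the actual closest-point matching between the two cosets, not for a literal composition of edge monomorphisms, which the projection map is not.
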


The idea of the proof is as follows: first, it can be shown that any vertex of $\widehat{W}$ that is not a vertex of $\widehat{V}$ is bounded distance away from the left coset $\pi_A(V)\Gamma_c$. The resulting vertices of $\pi_A(V)\Gamma_c$ therefore form a quasi-geodesic, and the result then follows from stability of quasi-geodesics in the hyperbolic group $\Gamma_c$. The proof of Proposition~\ref{prop:Hausdorff} is illustrated in Figure~\ref{fig:Hausdorff}.

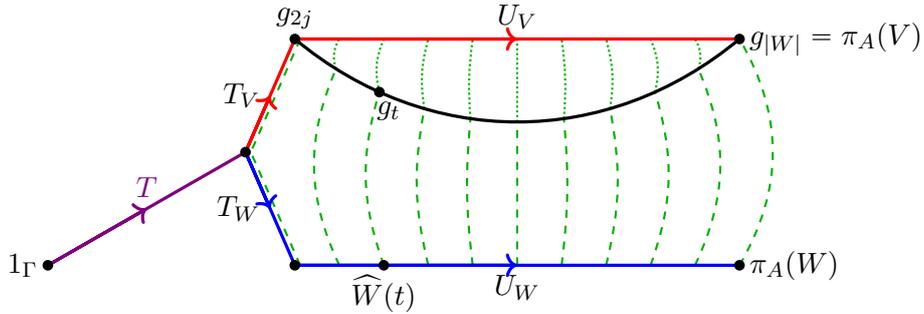
\begin{figure}[ht]
{\centering
\begin{tikzpicture}[x=1.3cm,y=1.5cm]
\draw [thick,green!70!black,dashed,xshift=2pt] (2.5,0) -- (2,1) -- (2.5,2);
\draw [red!50!blue,very thick,->] (2,1) -- (0,0) -- (1,0.5) node [above] {$T$};
\path (2.5,2) -- (2,1) -- (2.25,1.5) node [left] {$T_V$};
\path (7,2) -- (2.5,2)
    node [behind path, pos=0.9] (c1) {}
    node [behind path, pos=0.8] (c2) {}
    node [behind path, pos=0.7] (c3) {}
    node [behind path, pos=0.6] (c4) {}
    node [behind path, pos=0.5] (c5) {}
    node [behind path, pos=0.4] (c6) {}
    node [behind path, pos=0.3] (c7) {}
    node [behind path, pos=0.2] (c8) {}
    node [behind path, pos=0.1] (c9) {}
    node [behind path, pos=0] (c10) {}
    -- (4.75,2) node [above] {$U_V$};
\path (2.5,0) -- (2,1) -- (2.25,0.5) node [left] {$T_W$};
\path  (7,0) -- (2.5,0)
    node [behind path, pos=0.9] (a1) {}
    node [behind path, pos=0.8] (a2) {}
    node [behind path, pos=0.7] (a3) {}
    node [behind path, pos=0.6] (a4) {}
    node [behind path, pos=0.5] (a5) {}
    node [behind path, pos=0.4] (a6) {}
    node [behind path, pos=0.3] (a7) {}
    node [behind path, pos=0.2] (a8) {}
    node [behind path, pos=0.1] (a9) {}
    node [behind path, pos=0] (a10) {}
    -- (4.75,0) node [below] {$U_W$};
\path (7,2) to[bend left=40] 
    node [behind path, pos=0.9] (b1) {}
    node [behind path, pos=0.8] (b2) {}
    node [behind path, pos=0.7] (b3) {}
    node [behind path, pos=0.6] (b4) {}
    node [behind path, pos=0.5] (b5) {}
    node [behind path, pos=0.4] (b6) {}
    node [behind path, pos=0.3] (b7) {}
    node [behind path, pos=0.2] (b8) {}
    node [behind path, pos=0.1] (b9) {}
    node [behind path, pos=0] (b10) {}
    (2.5,2);
\foreach \i in {1,...,10} {
    \draw [thick,green!70!black,dashed] (a\i.center) to[bend right={6*\i-30}] (b\i.center);
    \draw [thick,green!70!black,densely dotted] (b\i.center) to[bend right={6*\i-30}] (c\i.center);
}
\draw [blue,very thick,->] (2.5,0) -- (2,1) -- (2.25,0.5);
\draw [blue,very thick,->] (7,0) -- (2.5,0) -- (4.75,0);
\draw [red,very thick,->] (2.5,2) -- (2,1) -- (2.25,1.5);
\draw [red,very thick,->] (7,2) -- (2.5,2) -- (4.75,2);
\fill (0,0) circle (2pt) node [left] {$1_\Gamma$};
\fill (7,0) circle (2pt) node [right] {$\pi_A(W)$};
\fill (7,2) circle (2pt) node [right] {$g_{|W|} = \pi_A(V)$};
\fill (2.5,2) circle (2pt) node [above] {$g_{2j}$};
\fill (b2) circle (2pt) node [below,xshift=3.5pt] {$g_t$};
\fill (a2) circle (2pt) node [below] {$\widehat{W}(t)$};
\draw [very thick] (7,2) to[bend left=40] (2.5,2);
\fill (2,1) circle (2pt);
\fill (2.5,0) circle (2pt);
\end{tikzpicture}

}

\caption{The proof of Proposition~\ref{prop:Hausdorff}, with $\widehat{V}$ (red and purple) and $\widehat{W}$ (blue and purple) shown. The black path is a $(\zeta,\zeta)$-quasi-geodesic in $\pi_A(V)\Gamma_c$ (with respect to the metric $d_B$), the green dashed lines represent paths of length $\leq 2\eta$ in $\cay(\Gamma,A)$, and the green dotted lines are mapped by $\pi_A(V)^{-1}$ to paths of length $\leq \kappa-2\eta$ in $\cay(\Gamma_c,B)$.}
\label{fig:Hausdorff}
\end{figure}

\begin{proof}
Let $\eta = \max \{ d_{\calt}(\widetilde{c},a \cdot \widetilde{c}) \mid a \in A \}$, and let $\zeta \geq 4\eta+1$ be such that for any $g \in \Gamma_c$ with $d_A(1,g) \leq 4\eta+1$ we have $d_B(1,g) \leq \zeta$ (such $\eta$ and $\zeta$ exist since $A$ is finite). Since $\Gamma_c$ is hyperbolic, there exists $\kappa \geq 2\eta$ such that any two $(\zeta,\zeta)$-quasi-geodesics with the same endpoints in the Cayley graph $\cay(\Gamma_c,B)$ are Hausdorff distance $\leq \kappa-2\eta$ away from each other \cite[Chapitre 3, Th\'eor\`eme 1.3]{CDP2006}.

Now, we can write $V = s_1e_1 \cdots s_ne_n U_V$ and $W = s_1'e_1' \cdots s_m'e_m' U_W$, where $s_1e_1 \cdots s_ne_n, s_1'e_1' \cdots s_m'e_m' \in \calw_{\calt}$ and $U_V,U_W \in \calw_B$. Let $j \leq \min\{n,m\}$ be the largest integer such that $(s_i,e_i) = (s_i',e_i')$ for all $i \leq j$; we write $T = s_1e_1 \cdots s_je_j$, $T_V = s_{j+1}e_{j+1} \cdots s_ne_n$ and $T_W = s_{j+1}'e_{j+1}' \cdots s_m'e_m'$, so that $V = T T_V U_V$, and $W = T T_W U_W$. Since $d_A(\pi_A(V),\pi_A(W))\leq 1$, it then follows that $d_{\calt}(\widetilde{c}, \pi_A(T_V^{-1} T_W) \cdot \widetilde{c}) = d_{\calt}(\pi_A(V) \cdot \widetilde{c}, \pi_A(W) \cdot \widetilde{c}) \leq \eta$, and therefore $(n-j)+(m-j) \leq \eta$.

We aim to show that for any $t \in \ZZ_{\geq 0}$, the element $\widehat{W}(t)$ is distance $\leq \kappa$ away from $\widehat{V}$ in $\cay(\Gamma,A)$; the proof is analogous if the roles of $\widehat{V}$ and $\widehat{W}$ are swapped. The result is clear for $t < 2j$, as in that case $\widehat{W}(t) = \widehat{V}(t)$. Therefore, we may assume, without loss of generality, that $t \geq 2j$.

We first claim that for each $t$ there exists an element $g_t \in \pi_A(V) \Gamma_c$ such that we have $d_A(\widehat{W}(t),g_t) \leq 2\eta$. Indeed, if $t \leq 2m$ then we can take $g_t := \pi_A(T T_V)$: then $g_t^{-1}\widehat{W}(t)$ is labelled by a subword of $T_V^{-1}T_W$, and we have $|T_V| = 2(n-j)$ and $|T_W| = 2(m-j)$, implying that $d_A(\widehat{W}(t),g_t) \leq 2(n-j)+2(m-j) \leq 2\eta$. Otherwise, we have $\widehat{W}(t) \in \pi_A(W)\Gamma_c$ and therefore $d_A(\widehat{W}(t)^{-1}\pi_A(V) \cdot \widetilde{c}, \widetilde{c}) = d_A(\pi_A(V) \cdot \widetilde{c},\pi_A(W) \cdot \widetilde{c}) \leq \eta$, implying that $\widehat{W}(t)^{-1}\pi_A(V)$ has normal form $T' U'$, where $T' = s_1''e_1'' \cdots s_\ell''e_\ell'' \in \calw_{\calt}$ for some $\ell \leq \eta$ and $U' \in \calw_B$; we then set $g_t := \widehat{W}(t)^{-1} \pi_A(T')$, so that $d_A(\widehat{W}(t),g_t) = d_A(1,\pi_A(T')) \leq 2\ell \leq 2\eta$, as claimed. Without loss of generality, we may also assume that $g_{|W|} = \pi_A(V)$, since we have $d_A(\pi_A(V),\widehat{W}(|W|)) \leq 1$.

Now let $g_t' := \pi_A(TT_V)^{-1} g_t $ for $2j \leq t \leq |W|$. This yields a collection $g_{2j}',g_{2j+1}',\ldots,g_{|W|}' \in \Gamma_c$ such that $d_A(\widehat{W}(t),\pi_A(TT_V)g_t') \leq 2\eta$ for all $t$. In particular, for $2j \leq t < |W|$ we have
\begin{align*}
d_A(g_t',g_{t+1}') &= d_A(g_t,g_{t+1}) \\ &\leq d_A(g_t,\widehat{W}(t))+d_A(\widehat{W}(t),\widehat{W}(t+1))+d_A(\widehat{W}(t+1),g_{t+1}) \\ &\leq 2\eta + 1 + 2\eta = 4\eta+1,
\end{align*}
implying that $d_B(g_t',g_{t+1}') \leq \zeta$ by the choice of $\zeta$. In particular, the points $g_{2j}',\ldots,g_{|W|}'$ are vertices of a $(\zeta,\zeta)$-quasi-geodesic in $\cay(\Gamma_c,B)$ starting at $g_{2j}' = 1_{\Gamma_c}$ and ending at $g_{|W|}' = \pi_A(U_V)$. Therefore, by the choice of $\kappa$, the element $g_t'$ is distance at most $ \kappa-2\eta$ away from $\widehat{U_V}$ in $\cay(\Gamma_c,B)$, and hence in $\cay(\Gamma,A)$. This implies that $g_t = \pi_A(TT_V)g_t'$ is distance at most $ \kappa-2\eta$ away from $\pi_A(TT_V)\widehat{U_V} \subseteq \widehat{V}$, and thus $\widehat{W}(t)$ is distance at nost $ (\kappa-2\eta)+d_A(\widehat{W}(t),g_t) \leq \kappa$ away from $\widehat{V}$, as required.
\end{proof}

We have verified all of the properties for $(A,\call)$ to be a boundedly asynchronous structure for $\Gamma$. \qed

\section{Proof of Corollary~\ref{cor lattices}}\label{sec cor proof}

\begin{proof}
Let $\calt$ denote a locally finite tree such that $T=\Aut(\calt)$.  By \cite[Theorem~A(1)]{Hughes2021} the lattice $\Gamma$ splits as a \emph{graph of $H$-lattices}.   We briefly explain what this means.  By \cite[Definition~3.2]{Hughes2021}, $\Gamma$ splits as finite graph of groups such that every vertex or edge group $\Gamma_\sigma$ fits into a short exact sequence
\[\begin{tikzcd}
    1 \arrow[r] & F_\sigma  \arrow[r] & \Gamma_\sigma \arrow[r] & \Lambda_\sigma \arrow[r] & 1,
\end{tikzcd}\]
where $F_\sigma$ is a finite group and $\Lambda_\sigma$ is a uniform lattice in $H$.  Note that the graph of groups decomposition is induced by the action on the locally finite tree $\calt$.  Thus, to apply \Cref{prop:main} it suffices to show the vertex and edge groups are hyperbolic.  Now, any vertex or edge group $\Gamma_\sigma$ acts properly and cocompactly on the associated symmetric space $X$ of $H$ (via the projection $\Gamma_\sigma\onto \Lambda_\sigma$).  Here $X$ is one of the negatively curved symmetric spaces $\RH^n$, $\CH^n$, $\HH^n$, or $\OH$.  In particular, $X$ is a hyperbolic space in the sense of Gromov.  Hence, the vertex and edge groups are hyperbolic groups.   The result follows from \Cref{prop:main}.
\end{proof}

\bibliographystyle{halpha}
\bibliography{refs.bib}

\end{document}